\begin{document}
\parskip=6pt

\theoremstyle{plain}
\newtheorem{prop}{Proposition}
\newtheorem{lem}[prop]{Lemma}
\newtheorem{thm}[prop]{Theorem}
\newtheorem{cor}[prop]{Corollary}
\newtheorem{defn}[prop]{Definition}
\theoremstyle{definition}
\newtheorem{example}[prop]{Example}
\theoremstyle{remark}
\newtheorem{remark}[prop]{Remark}
\numberwithin{prop}{section}
\numberwithin{equation}{section}
%\numberwithin{example}{section}

\def\bC{\Bbb C}
\def\bK{\Bbb K}
\def\bR{\Bbb R}
\def\cB{\mathcal B}
\def\cH{\mathcal H}
\def\cQ{\mathcal Q}
\def\cK{\mathcal K}
\def\cL{\mathcal L}
\def\cM{\mathcal M}
\def\cS{\mathcal S}
\def\cU{\mathcal U}
\def\cW{\mathcal W}
\def\ep{\epsilon}
\def\os{\overline S}
\def\oS{\overline S}
\def\oU{\overline {\mathcal U}} 
\def\obU{\overline{\mathcal U}}
\def\ep{\epsilon}

\def\Rsa{R_{\text{sa}}}
\def\asa{A_{\text{sa}}}
\def\csa{C_{\text{sa}}}
\def\nul{\text{hull}}
\def\ker{\text{ker}}
\def\id{\text{id}}
\def\dim{\text{dim}}
\def\prim{\text{prim}}

\def\calg{C^*\text{--algebra}}
\def\calgs{C^*\text{--algebras}}
\def\salg{C^*\text{--subalgebra}}
\def\salgs{C^*\text{--subalgebras}}

\def\b1{\bold 1}
\def\tA{\widetilde A}
\def\tf{\widetilde f}
\def\Asa{\widetilde A_{\text{sa}}}
\def\Csa{\widetilde C_{\text{sa}}}
\def\im{\text{Im}}
\def\re{\text{Re}}
\def\m{\Bbb M_{n\text {sa}}}

\begin{titlepage}
\title{When is Every Quasi-multiplier a Multiplier?}
\author{Lawrence G.~Brown \thanks{AMS subject classication: 46L05.
Keywords and phrases: multiplier; quasimultiplier; Hilbert $C^*-$bimodule; imprimitivity bimodule; Calkin algebra.}}
\thispagestyle{empty}

\end{titlepage}
\date{}
\maketitle
\center{Dedicated to the memory of Ronald G. Douglas}
\abstract
We answer the title question for $\sigma$--unital $\calgs$. The answer is that the algebra must be the direct sum of a dual $C^*$-algebra and a $C^*$-algebra satisfying a certain local unitality condition.  We also discuss similar problems in the context of Hilbert $C^*-$bimodules and imprimitivity bimodules and in the context of centralizers of Pedersen's ideal.
\endabstract

\section*{0 Introduction}
Let $A$ be a $\calg$ and $A^{**}$ its Banach space double dual, also known as its enveloping von Neumann algebra.  An element $T$ of $A^{**}$ is called a multiplier of $A$ if $Ta\in A$ and $aT\in A$, $\forall a\in A$. Also $T$ is a left multiplier if $Ta\in A$, $\forall a\in A$, $T$ is a right multiplier if $aT\in A$, $\forall a\in A$ and $T$ is a quasi-multiplier if $aTb\in A$, $\forall a, b\in A$. The sets of multipliers, left multipliers, right multipliers and quasi-multipliers are denoted respectively by $M(A), LM(A), RM(A)$, and $QM(A)$. More information about multipliers, etc. can be found in \cite[\S{3.12}]{P}.

We believe that quasi-multipliers were first introduced to operator algebraists  in \cite{AP}. It was shown there that a self-adjoint element $h$ of $A^{**}$ is a multiplier if and only if $\pm h$ satisfy a certain semicontinuity property, and self-adjoint quasi-multipliers are characterized similarly with a weaker semicontinuity property. The fact that these semicontinuity properties are in general different was one of the key ``complications'' discovered in \cite{AP}. We will not use semicontinuity theory in any proofs in this paper.

Multipliers of $\calgs$ have many important applications. In particular they play a crucial role in the theory of extensions of $\calgs$, as shown in \cite{Bu}, and they are used in $KK$-theory.  Quasi-multipliers, though less important, also have applications as shown, for example, in \cite{B1}, \cite{S}, and \cite{BMS}. (Note that \cite{BMS} contains the results of \cite{S}).

It is obvious that $QM(A)=M(A)$ if $A$ is commutative, and more generally if $A$ is $n$-homogeneous. Also $QM(A)=M(A)$ if $A$ is elementary, and therefore also if $A$ is dual; i.e., if $A$ is the direct sum of elementary $\calgs$. It follows from \cite[Theorem 4.9]{B1} that if $LM(A)=M(A)$ for  a $\sigma$-unital $\calg$ $A$, then also $QM(A)=M(A)$. (This is shown also without $\sigma$-unitality in Proposition 2.7 below.) Therefore it is sufficient to consider the title question.

My association with Ron Douglas was very beneficial to and influential in my career. In particular my interests in multipliers and Calkin algebras arose from this association.

\section{Preliminaries.}

A $\calg$ $A$ is called \textit{locally} \textit{unital} if there is a family $\{I_j\}$ of (closed, two-sided) ideals such that $(\sum I_j)^-=A$ and for each $j$ there is $u_j$ in $A$ such that $(\b1-u_j)I_j=I_j(\b1-u_j)=\{0\}$. Here $\b1$ is the identity of $A^{**}$.  Since this concept may not be completely intuitive, we will explore what it means.

\begin{prop}  %1.1
If $I$ is an ideal of $A$, then there is $u$ in $A$ such that $(\b1-u)I=I(\b1-u)=\{0\}$ if and only if there is an ideal $J$ such that $IJ=\{0\}$ and $A/J$ is unital.
\end{prop}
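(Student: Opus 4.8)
The plan is to prove the two implications separately, in each case producing the other object explicitly. For the forward direction, suppose $u \in A$ satisfies $(\b1-u)I = I(\b1-u) = \{0\}$. The natural candidate for $J$ is a maximal ideal among those annihilating $I$, or more concretely the annihilator $J = \{a \in A : aI = Ia = \{0\}\}$, which is a closed two-sided ideal with $IJ = \{0\}$ automatically. The content is to show $A/J$ is unital, and the obvious candidate for its identity is the image $\dot u$ of $u$. One must check that $\dot u \dot a = \dot a \dot u = \dot a$ for all $a \in A$, i.e. that $(\b1 - u)a$ and $a(\b1-u)$ lie in $J$ for every $a$. This is where I expect the main work to be: a priori we only know $(\b1-u)$ kills $I$ on both sides, not that $(\b1-u)a$ annihilates $I$. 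The key computation is that for $b \in I$ we have $\bigl((\b1-u)a\bigr) b = (\b1-u)(ab)$ and $ab \in I$, so this vanishes; similarly $b\bigl((\b1-u)a\bigr) = (ba)(\b1-u) - b a u$, and here one uses that $ba \in I$ together with $I(\b1-u) = \{0\}$ and that $u$ may be replaced: the cleanest route is to observe $(\b1-u)I = \{0\}$ gives $ub = b$ for $b \in I$, hence $b(\b1-u)a = (b - bu)a = \dots$; I would chase these identities carefully, using both one-sided annihilation conditions, to conclude $(\b1-u)a, a(\b1-u) \in J$. Note also $u \notin J$ unless $I = \{0\}$, and the degenerate case $I = \{0\}$ is handled by taking $J = A$.

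For the converse, suppose $J$ is an ideal with $IJ = \{0\}$ and $A/J$ unital, with identity $e = \dot v$ for some $v \in A$ (choosing a lift $v \in A$). I claim $v$ works as the desired $u$. Since $A/J$ is unital with identity $\dot v$, for every $a \in A$ we have $a - va \in J$ and $a - av \in J$. Taking $a = b \in I$ and multiplying: $(\b1 - v)b = b - vb \in J \cap \overline{I} $, and since $IJ = \{0\}$ forces $J \cap I$... wait, that is not immediate, so instead I use $IJ = \{0\}$ directly: $(\b1-v)b \in J$ and also $(\b1-v)b = \lim (\b1-v) b_n$ with... more simply, for $c \in I$ we get $c\bigl((\b1-v)b\bigr) = c(b - vb)$; since $b - vb \in J$ and $c \in I$ and $IJ = \{0\}$, this is $0$; taking $c$ from an approximate identity of $I$ shows $(\b1-v)b = 0$. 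The symmetric argument gives $b(\b1-v) = 0$, so $(\b1-v)I = I(\b1-v) = \{0\}$ as required.

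The main obstacle, as indicated, is the forward direction's verification that $\dot u$ is a two-sided identity for $A/J$ — specifically keeping track of which side each annihilation hypothesis applies to and correctly manipulating the nonselfadjoint element $u$ (we are not told $u = u^*$). A useful preliminary reduction is to note that $(\b1-u)I = \{0\}$ is equivalent to $ub = b$ for all $b \in I$, and $I(\b1-u) = \{0\}$ to $bu = b$ for all $b \in I$; rephrasing the hypotheses this way makes the ideal membership computations $\bigl((\b1-u)a\bigr)I = \{0\}$ and $I\bigl(a(\b1-u)\bigr) = \{0\}$ transparent, since for $b \in I$ one has $ab, ba \in I$ and can apply these identities. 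Once those memberships are established the rest is formal. Throughout, approximate identities for $I$ are the tool for passing from ``kills all of $I$ on one side'' to actual equalities, and I would invoke them wherever a limit is needed.
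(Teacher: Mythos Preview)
Your approach is correct, though your exposition wanders a bit before landing on the clean computation. The forward direction in fact requires no subtlety once you phrase the hypotheses as $ub=b=bu$ for $b\in I$: for any $a\in A$ and $b\in I$ you have $\bigl((\b1-u)a\bigr)b=(\b1-u)(ab)=0$ and $b\bigl((\b1-u)a\bigr)=\bigl(b(\b1-u)\bigr)a=0$, so $(\b1-u)a$ lies in your annihilator ideal $J$; the other side is symmetric. (Your attempted identity $b((\b1-u)a)=(ba)(\b1-u)-bau$ is simply wrong algebra, but the correct associativity is immediate.) For the converse, you can shorten your approximate-identity argument by noting directly that $(\b1-v)b\in I\cap J$ and that $IJ=\{0\}$ forces $I\cap J=\{0\}$ via $x^*x=0$.

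The paper takes a genuinely different $J$ in the forward direction: it sets $J$ equal to the closed linear span of $A(\b1-u)A$, which makes $IJ=\{0\}$ immediate, and then uses the $C^*$-identity $\|x+J\|^2=\|x^*x+J\|$ to deduce $(\b1-u)a\in J$ from $\bigl((\b1-u)a\bigr)^*\bigl((\b1-u)a\bigr)\in J$. Your choice (the two-sided annihilator of $I$) is the larger ideal and your verification is purely algebraic, avoiding the $C^*$-norm trick; the paper's choice is smaller and its argument is shorter but relies on $C^*$-structure. For the converse the paper argues exactly via $I\cap J=\{0\}$ as suggested above, which is cleaner than your approximate-identity route but equivalent in content.
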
   % 1.1

\begin{proof}
If $u$ is as above, let $J$ be the closed span of $A(\b1-u)A$.  Then clearly $IJ=\{0\}$. Also the image of $u$ is an identity for $A/J$, since, for example, the fact that $((\b1-u)a)^*(\b1-u)a\in J$ implies that $(\b1-u)a\in J$. Conversely if $J$ is as above, let $u$ be an element of $A$ whose image is the identity of $A/J$, then $(\b1-u)I\subset I\cap J=\{0\}$, and similarly $I(\b1-u)=\{0\}$. 
\end{proof}

Note that it follows from the above that $u$ may be taken to be a positive contraction. The following lemma is undoubtedly known but we don't know a reference.

\begin{lem}  % 1.2
If $I$ and $J$ are ideals of a $\calg$ $A$ such that $IJ=\{0\}$, $A/I$ is unital, and $A/J$ is unital, then $A$ is unital.
\end{lem}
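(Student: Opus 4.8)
The plan is to exhibit an explicit two-sided identity for $A$. Choose $e,f\in A$ whose images are the identities $1_{A/I}$ of $A/I$ and $1_{A/J}$ of $A/J$ respectively; such $e,f$ exist since the quotient maps are surjective (and by the remark after Proposition 1.1 they could be taken positive contractions, though that is irrelevant here). The candidate identity is the symmetrized element $u=e+f-ef\in A$.

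First I would record that $I\cap J=\{0\}$: if $x\in I\cap J$ then $x\in I$ and $x^*\in J$, so $xx^*\in IJ=\{0\}$ and hence $x=0$. Consequently the canonical $*$-homomorphism $\pi\colon A\to (A/I)\oplus(A/J)$, $\pi(a)=(a+I,\,a+J)$, has kernel $I\cap J=\{0\}$ and is therefore injective.

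The main computation is then a single line in each quotient. Modulo $I$ one has $e\equiv 1_{A/I}$, so $u\equiv 1_{A/I}+f-1_{A/I}\cdot f=1_{A/I}$; modulo $J$ one has $f\equiv 1_{A/J}$, so $u\equiv e+1_{A/J}-e\cdot 1_{A/J}=1_{A/J}$. Hence $\pi(u)=(1_{A/I},1_{A/J})$, which is the identity of the unital algebra $(A/I)\oplus(A/J)$. Since $\pi$ is an injective homomorphism, $\pi(ua)=\pi(u)\pi(a)=\pi(a)=\pi(au)$ for every $a\in A$, so $ua=au=a$, i.e. $u$ is an identity for $A$.

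I do not expect a genuine obstacle here; the two points deserving a word of care are the faithfulness of $\pi$ (that is, $I\cap J=\{0\}$) and the fact that one need not know in advance that $u$ is self-adjoint — self-adjointness of the identity follows a posteriori from its uniqueness. An alternative, more pedestrian route avoids $\pi$ altogether: from $I\cap J=\{0\}$ one checks that $e$ acts as a two-sided identity on $J$ and $f$ as a two-sided identity on $I$, and then a short manipulation with the relations $ea-a,\ ae-a\in I$ and $fa-a,\ af-a\in J$ yields $ua=au=a$ directly; but routing through $\pi$ is shorter.
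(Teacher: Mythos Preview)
Your proof is correct and follows the same underlying strategy as the paper: construct a single element that maps to the identity in both $A/I$ and $A/J$, and conclude via $I\cap J=\{0\}$ (which you prove explicitly, while the paper leaves it implicit in its final ``Therefore'').

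The only real difference is the construction of that element. The paper observes that the two lifts $u,v$ of the identities satisfy $u-v\in I+J$, writes $u-v=x+y$ with $x\in I$, $y\in J$, and sets $w=u-x=v+y$; your candidate $e+f-ef$ is a direct formula that bypasses the $I+J$ decomposition. Your route via the injective map $\pi:A\hookrightarrow (A/I)\oplus(A/J)$ makes the final step cleaner and more explicit than the paper's terse ``Therefore $w$ is an identity for $A$,'' at the cost of one extra line. Neither approach gains or loses anything substantive over the other.
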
    %1.2

\begin{proof}
Let $u$ and $v$ be elements of $A$ such that the image of $u$ is the identity of $A/I$ and the image of $v$ is the identity of $A/J$. Then both $u$ and $v$ map to the identity of $A/(I+J)$. Therefore $u-v=x+y$ with $x\in I$ and $y\in J$. If  $w=u-x=v+y$, then $w$ gives the identity both modulo $I$ and modulo $J$. Therefore $w$ is an identity for $A$.
\end{proof}  

We denote by prim$\, A$ the primitive ideal space of $A$, the basic facts about prim$\, A$ can be found in \cite[\S 4.1]{P}.

\begin{prop}  %1.3
The $\calg$ $A$ is locally unital if and only if:
\begin{itemize}
\item[(i)] Every compact subset of prim$\, A$ has compact closure, and 
\item[(ii)] For every closed compact subset of prim$\, A$ the corresponding quotient algebra is unital.
\end{itemize}
\end{prop}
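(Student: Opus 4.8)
The plan is to translate the statement into the topology of $\prim\,A$, via the lattice isomorphism $I\mapsto U_I:=\{P\in\prim\,A:I\not\subseteq P\}$ between closed ideals of $A$ and open subsets of $\prim\,A$. Under this correspondence $\prim(A/I)$ is homeomorphic to the closed set $\prim\,A\setminus U_I=\nul(I)$; for closed ideals $I,K$ one has $IK=0$ iff $I\cap K=0$ iff $U_I\cap U_K=\emptyset$; $\nul(K_1\cap\dots\cap K_n)=\nul(K_1)\cup\dots\cup\nul(K_n)$; and $(\sum_j I_j)^-=A$ iff $\bigcup_j U_{I_j}=\prim\,A$. I shall also use two standard facts: (a) $\prim\,B$ is compact whenever $B$ is unital (cover $\prim\,B$ by open sets $U_{J_\lambda}$; then $\b1\in(\sum_\lambda J_\lambda)^-$, so some finite subsum $J_{\lambda_1}+\dots+J_{\lambda_n}$ contains an element within distance $1$ of $\b1$, hence an invertible element, hence equals $B$); and (b) for $a\in A^+$ and $\epsilon>0$ the set $C_{a,\epsilon}:=\{P:\|a+P\|\ge\epsilon\}$ is compact, while $\{P:\|a+P\|>\epsilon\}$ is the open set $U_I$ where $I$ is the closed ideal generated by $(a-\epsilon)_+$.

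For necessity, suppose $A$ is locally unital, witnessed by $\{I_j\}$, and (by Proposition 1.1) choose ideals $K_j$ with $I_jK_j=0$ and $A/K_j$ unital. Put $F_j:=\nul(K_j)$: this is closed, contains $U_{I_j}$ (because $I_jK_j=0$ gives $U_{I_j}\cap U_{K_j}=\emptyset$), and is compact by (a). Given a compact $C\subseteq\prim\,A$, cover it by finitely many $U_{I_{j_1}},\dots,U_{I_{j_n}}$; then $\overline C\subseteq F_{j_1}\cup\dots\cup F_{j_n}$, a finite union of compacta, and a closed subset of a quasi-compact space is quasi-compact, so $\overline C$ is compact, proving (i). Given a closed compact $F$, cover it likewise so that $F\subseteq\nul(L)$ with $L:=K_{j_1}\cap\dots\cap K_{j_n}$; an induction on $n$ using Lemma 1.3 shows $A/L$ is unital (at each stage the two relevant ideals of the relevant quotient have zero intersection, hence zero product, and the two further quotients are unital by the inductive hypothesis), and then $A/\ker\,F$, being a quotient of $A/L$, is unital; this proves (ii).

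For sufficiency, assume (i) and (ii). Fix $a\in A^+$ and $\epsilon>0$. By (b), $C_{a,\epsilon}$ is compact, so $\overline{C_{a,\epsilon}}$ is compact by (i); by (ii), $A/K_{a,\epsilon}$ is unital, where $K_{a,\epsilon}:=\ker(\overline{C_{a,\epsilon}})$. Let $I_{a,\epsilon}$ be the closed ideal generated by $(a-\epsilon)_+$, so $U_{I_{a,\epsilon}}=\{P:\|a+P\|>\epsilon\}\subseteq C_{a,\epsilon}\subseteq\overline{C_{a,\epsilon}}$, whence $U_{I_{a,\epsilon}}\cap U_{K_{a,\epsilon}}=\emptyset$ and $I_{a,\epsilon}K_{a,\epsilon}=0$; Proposition 1.1 then produces $u_{a,\epsilon}\in A$ with $(\b1-u_{a,\epsilon})I_{a,\epsilon}=I_{a,\epsilon}(\b1-u_{a,\epsilon})=0$. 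Finally, each $P\in\prim\,A$ lies in $U_{I_{a,\epsilon}}$ for suitable $a\in A^+$ and $\epsilon>0$ (choose $a\notin P$ and $\epsilon=\tfrac12\|a+P\|$), so $\bigcup_{a,\epsilon}U_{I_{a,\epsilon}}=\prim\,A$, i.e.\ $(\sum_{a,\epsilon}I_{a,\epsilon})^-=A$; hence $\{I_{a,\epsilon}\}$ witnesses that $A$ is locally unital.

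There is no deep new idea here: once the ideal/open-set dictionary and facts (a), (b) are granted, the proof is bookkeeping. The points demanding care are the non-Hausdorff subtleties — throughout, ``compact'' should be read as quasi-compact, and one must not assume that the closures $\overline{U_{I_j}}$ are compact; this is precisely why the proof of sufficiency routes through the sets $C_{a,\epsilon}$ rather than through such closures — together with the inductive application of Lemma 1.3 needed for (ii). I expect marshalling these standard structural facts about $\prim\,A$ correctly to be the only real work.
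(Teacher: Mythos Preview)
Your proof is correct and follows essentially the same route as the paper's: both directions hinge on Proposition~1.1 together with Lemma~1.2 (which you cite as ``Lemma~1.3''---fix the label) to pass from finitely many $K_{j_i}$ to a single ideal with unital quotient, and on covering $\prim A$ by opens each contained in a compact set. The only cosmetic difference is that in the sufficiency direction the paper simply asserts the existence of such a cover (implicitly invoking local quasi-compactness of $\prim A$), whereas you construct it explicitly via the sets $C_{a,\epsilon}$ and the ideals generated by $(a-\epsilon)_+$; this makes your argument more self-contained but is not a different idea.
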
   %1.3

\begin{proof}
If $A$ is locally unital let $\{I_j\}$ be as in the definition. Then $\{\prim I_j\}$ is an open cover of $\prim A$. If $K$ is a compact subset of prim$\, A$, then there are $I_{j_1},\dots,I_{j_n}$ such that $K\subset \bigcup^n_1\prim I_{j_l}$. By Proposition 1.1 and Lemma 1.2 there is an ideal $J$ such that $(I_{j_1}+\cdots+I_{j_n}) J=\{0\}$ and $A/J$ is unital. It follows that if $L=\text{hull} (J)$, then $L$ is compact and closed, and $L\supset \bigcup^n_1\prim I_{j_l}\supset K$. This implies both (i) and (ii). 

Now assume (i) and (ii). There is an open cover $\{U_j\}$ of $\prim A$, such that each $U_j$ is contained in a compact set $K_j$. If $I_j$ is the ideal corresponding to $U_j$, then $A=(\sum I_j)^-$.  If $L_j=\overline{K_j}$ and $J_j=\ker(L_j)$, then $A/J_j$ is unital and $J_jI_j=\{0\}$. 
\end{proof}

Note that a dual $\calg$ is locally unital if and only if all of the elementary $\calgs$ in its direct sum decomposition are finite dimensional.  This follows from the above proposition, or it can be deduced directly from the definition.

If $\mathcal{A}=\{A_x: x\in X\}$ is a continuous field of $\calgs$ over a locally compact Hausdorff space $X$, then the corresponding $\calg$ is the set of continuous sections of $A$ vanishing at $\infty$. Of course $n$-homogeneous $\calgs$ arise in this way, where each $A_x$ is isomorphic to the algebra of $n\times n$ matrices. The local unitality of such algebras is discussed in the next proposition.

\begin{prop}   %1.4
Let $A$ be the $\calg$ arising from a continuous field of $\calgs$ $\{A_x\}$ over a locally compact Hausdorff space $X$. If each $A_x$ is unital and if the identity section is continuous, then $A$ is locally unital. Conversely, if each $A_x$ is simple and $A$ is locally unital, then each $A_x$ is unital and the identity section is continuous. 
\end{prop}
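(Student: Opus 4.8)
The plan is to prove the two implications separately: the direct one by a partition-of-unity construction, the converse by feeding the hypotheses into Proposition 1.3. Write $\pi_x\colon A\to A_x$ for evaluation at $x$, and recall that $C_0(X)$ acts centrally on $A$ and on sections. For the direct implication, since $X$ is locally compact Hausdorff choose an open cover $\{U_j\}$ of $X$ by relatively compact sets and set $I_j=\{a\in A:\pi_x(a)=0\text{ for }x\notin U_j\}$, a closed ideal of $A$. A routine partition-of-unity argument gives $(\sum_j I_j)^-=A$: for $a\in A$ and $\ep>0$ the set $\{x:\|\pi_x(a)\|\ge\ep\}$ is compact, hence contained in $U_{j_1}\cup\cdots\cup U_{j_n}$ for finitely many indices, and if $\psi_1,\dots,\psi_n\in C_c(X)$ satisfy $\mathrm{supp}\,\psi_i\subseteq U_{j_i}$ and $\sum_i\psi_i=1$ on that set, then $\sum_i\psi_i a\in\sum_j I_j$ and $\|a-\sum_i\psi_i a\|\le\ep$. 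Now let $e$ be the identity section, continuous by hypothesis, and for each $j$ pick $\phi_j\in C_c(X)$ with $0\le\phi_j\le1$ and $\phi_j\equiv1$ on $\overline{U_j}$. Then $u_j:=\phi_j e$ is a continuous section vanishing at $\infty$, hence $u_j\in A$, and for $a\in I_j$ the value of $(\b1-u_j)a$ at $x$ is $(1-\phi_j(x))\pi_x(a)$, which is $0$ when $x\in U_j$ (as $\phi_j(x)=1$) and when $x\notin U_j$ (as $\pi_x(a)=0$). Thus $(\b1-u_j)I_j=I_j(\b1-u_j)=\{0\}$, and $A$ is locally unital.

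For the converse, assume each $A_x$ is simple. Then each $\ker\pi_x$ is a primitive ideal, and conversely every irreducible representation $\rho$ of $A$ factors through some $A_x$: extending $\rho$ to $M(A)$ and applying Schur's lemma to the central subalgebra $C_0(X)\subseteq Z(M(A))$ forces $\rho|_{C_0(X)}$ to be evaluation at some point $x$, so $\ker\rho=\ker\pi_x$. Hence $x\mapsto\ker\pi_x$ is a homeomorphism of $X$ onto $\prim A$. Since compact subsets of the Hausdorff space $X$ are closed, condition (i) of Proposition 1.3 is automatic, so that proposition shows $A$ is locally unital if and only if $A/\ker(K)$ is unital for every compact $K\subseteq X$; here $\ker(K)=\{a:\pi_x(a)=0\ \forall x\in K\}$, and $A/\ker(K)$ is canonically the section $\calg$ of the restricted continuous field over $K$, the surjections $A/\ker(K)\to A_x$ ($x\in K$) being those induced by the $\pi_x$.

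Taking $K=\{x\}$ gives that $A_x\cong A/\ker(\{x\})$ is unital for every $x$. For continuity of the identity section, fix $x_0\in X$, choose a compact neighbourhood $K$ of $x_0$, and let $\widetilde{e}_K$ be the identity of the unital algebra $A/\ker(K)$. For $x\in K$ the surjection $A/\ker(K)\to A_x$ carries $\widetilde{e}_K$ to the identity of $A_x$, so $\widetilde{e}_K$ is precisely the restriction to $K$ of the identity section. Lifting $\widetilde{e}_K$ to some $a\in A$, we obtain $\pi_x(a)=1_{A_x}$ for all $x\in K$, so $a$ agrees with the identity section on the interior $K^{\circ}$ of $K$, a neighbourhood of $x_0$; as $x_0$ was arbitrary, the identity section is continuous.

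The one genuinely substantive point is the identification $\prim A\cong X$ in the second paragraph, for which simplicity of the fibres is indispensable — without it the converse is false, as $C_0(X)\otimes B$ with $B$ non-unital and non-simple shows — together with the standard fact that $A/\ker(K)$ is the section $\calg$ of the restricted field. The rest is bookkeeping: partitions of unity and Urysohn functions for the direct part, uniqueness of identities in quotients for the converse.
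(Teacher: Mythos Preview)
Your proof is correct and follows essentially the same route as the paper's: for the direct implication you build the ideals $I_j$ from a relatively compact open cover and take $u_j=\phi_j e$ with $\phi_j\in C_c(X)$ identically $1$ on $\overline{U_j}$, exactly as the paper does; for the converse you identify $\prim A$ with $X$ via simplicity of the fibres and then invoke Proposition~1.3 to get unitality of the section algebra over each compact $K$, again matching the paper. Your write-up supplies more justification (the Schur-lemma argument for $\prim A\cong X$, the explicit partition-of-unity density argument, and the lifting step for continuity of the identity section) where the paper simply asserts these standard facts, but the underlying strategy is identical.
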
  % 1.4

\begin{proof}
For the first statement let $\{U_j\}$ be an open cover of $X$ such that each $\overline{U_j}$ is compact. If $I_j$ is the set of continuous sections vanishing outside of $U_j$, then $I_j$ is an ideal of $A$ and $A=(\sum I_j)^-$. If $f_j$ is a continuous scalar--valued function on $X$ vanishing at $\infty$ such that $f_j(x)=1$, $\forall x\in U_j$, let $u_j=f_j\b1$. Then $u_j\in A$ and $(\b1-u_j)I_j=I_j(\b1-u_j)=\{0\}$. 

Now assume each $A_x$ is simple and $A$ is locally unital. Note that $\prim A$ can now be identified with $X$. If $K$ is a compact subset of $X$, then the corresponding quotient algebra is obtained from the restriction of the continuous field to $K$. It follows from Proposition 1.3 that this algebra is unital. Therefore each $A_x$, $x\in K$, is unital and the identity section is continuous on $K$. Since $X$ is locally compact, the result follows. 
\end{proof}

Note that $A$ arises from a continuous field of simple $\calgs$ over a locally compact Hausdorff space if and only if $\prim A$ is Hausdorff. We provide an example to show that the simplicity is necessary in the second statement of Proposition 1.4. Let $X=[0,1]$ and let $A_x=\bC\oplus\bC$ for $x\ne 0$. Let $A_0=\bC$, identified with $\bC\oplus\{0\}\subset \bC\oplus \bC$. So $A$ is the set of continuous functions $f$ from $[0,1]$ to $\bC\oplus\bC$ such that $f(0)\in \bC\oplus\{0\}$. Clearly the identity section of this continuous field is not continuous. But $A$ is commutative and therefore locally unital.

We now establish some notations and record some facts that will be used throughout the next section. Let $e$ be a strictly positive element of a $\sigma$-unital $\calg$ $A$. Strictly positive elements are discussed in \cite[\S 3.10]{P}. One property is that the kernel projection of $e$ in the von Neumann algebra $A^{**}$ is 0. Another is that the sets $eA, Ae$, and $eAe$ are dense in $A$. 

We will also use the concept of open projection, see \cite{A} or the end of \cite[\S 3.11]{P}. Certain projections in $A^{**}$ are called open, and there is an order--preserving bijection between open projectious and hereditary $\salgs$ of $A$. If $p$ is the open projection for the hereditary $\salg\, B$ then $B=(pA^{**}p)\cap A$ and any approximate identity for $B$ converses to $p$ in the strong topology of $A^{**}$. Also $p$ is central in $A^{**}$ if and only if $B$ is an ideal, and for general $p$ the central cover of $p$ in $A^{**}$ is the open projection for the ideal of $A$ generated by $B$. 

If $U$ is an open subset of $(0,\infty)$, then the spectral projection $\chi_U(e)$ is an open projection. The corresponding subalgebra $B$ is the hereditary $\calg\, B$ generated by $f(e)$, where $f$ is any continuous function on $[0,\infty)$ such that $U=\{x:f(x)\ne 0\}$. If $U=(\ep,\infty)$, we will denote the corresponding subalgebra by $B_\ep$, and if $U=(0,\ep)$ we will denote the subalgebra by $C_\ep$.  Also we denote by $I_\ep$ the ideal generated by $C_\ep$.  (The reason we are looking at subsets of $(0,\infty)$ instead of $[0,\infty)$ is that the kernel projection of $e$ is $0$.)

If $p$ and $q$ are open projections with corresponding subalgebras $C$ and $B$, let $X(p,q)$ denote the closed linear span of $CAB$. Two facts that we don't need are that $X(p,q)=(pA^{**}q)\cap A$ and that $a$ (in $A$) is in $X(p,q)$ if and only if $a^*a\in B$ and $aa^*\in C$. A fact that we do need is that $X(p,q)=\{0\}$ if and only if $p$ and $q$ are centrally disjoint in $A^{**}$. This follows from the fact that the strong closure of $X(p,q)$ in $A^{**}$ is $pA^{**}q$.

The following lemma is probably known, but we don't know a reference.

\begin{lem}   %1.5
If $A$ is an infinite dimensional $\calg$, then $A$ contains an infinite sequence $\{B_n\}$ of mutually orthogonal non-zero hereditary $\salgs$.
\end{lem}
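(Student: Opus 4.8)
The plan is to build the $B_n$ by functional calculus from a single positive element $a\in A$ with infinite spectrum: choose continuous bump functions $f_n\geq 0$ on $[0,\infty)$ with pairwise disjoint supports, each support meeting $\sigma(a)$, and let $B_n$ be the hereditary $\salg$ generated by $f_n(a)$. So there are really two tasks: (1) produce $a$ (for which it is enough to find a self-adjoint element with infinite spectrum and then square it), and (2) carry out the splitting. Only (1) is serious.

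For (1) I would prove the equivalent contrapositive statement: \emph{a $\calg$ in which every self-adjoint element has finite spectrum is finite-dimensional.} The hypothesis passes to the unitization, so assume $A$ is unital with unit $\b1$. The crucial observation is that such an $A$ admits no infinite sequence $\{q_n\}$ of mutually orthogonal nonzero projections, for otherwise $\sum_n\tfrac1n q_n$ would be a self-adjoint element of $A$ having $\tfrac1n$ in its spectrum for every $n$. Three consequences: first, refining a projection $q$ with $qAq\ne\bC q$ by the spectral projections of a non-scalar self-adjoint element of $qAq$ must terminate after finitely many steps, so every nonzero projection dominates a minimal one; second, a maximal orthogonal family $p_1,\dots,p_k$ of minimal projections is necessarily finite, and it sums to $\b1$, since otherwise $\b1-\sum_i p_i$ would be a nonzero projection dominating a minimal projection orthogonal to all the $p_i$, contradicting maximality; third, it follows that $A=\sum_{i,j}p_iAp_j$ with each corner $p_iAp_j$ at most one-dimensional (if $x,y\in p_iAp_j$ and $x\ne 0$, then $xx^*\in p_iAp_i=\bC p_i$ and $x^*y\in p_jAp_j=\bC p_j$ force $y\in\bC x$), whence $\dim A\leq k^2<\infty$.

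For (2), let $a\geq 0$ in $A$ have $\sigma(a)$ infinite. Being an infinite compact subset of $[0,\infty)$, $\sigma(a)$ has an accumulation point, so there are distinct $t_n\in\sigma(a)$ converging monotonically, and separating consecutive $t_n$ by their midpoints produces pairwise disjoint open intervals $U_n\subseteq(0,\infty)$ with $t_n\in U_n$ (discarding finitely many terms if necessary). Choose $f_n\geq 0$ continuous on $[0,\infty)$, supported in $U_n$, with $f_n(t_n)>0$. Then $f_n(a)\in A$ (since $f_n$ vanishes near $0$), $f_n(a)\ne 0$ (since $t_n\in\sigma(a)$), and $f_n(a)f_m(a)=(f_nf_m)(a)=0$ for $n\ne m$. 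Taking $B_n=\overline{f_n(a)Af_n(a)}$, the hereditary $\salg$ generated by $f_n(a)$, these are nonzero, and for $n\ne m$ one has $B_nB_m\subseteq\overline{f_n(a)Af_n(a)f_m(a)Af_m(a)}=\{0\}$ and likewise $B_mB_n=\{0\}$, so the $B_n$ are mutually orthogonal.

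The main obstacle is step (1): although the fact that a $\calg$ with all self-adjoint elements of finite spectrum is finite-dimensional is surely folklore, it does not seem to have a one-line proof, and the argument above requires checking carefully that ``refinement terminates'' and ``the maximal family is finite and sums to $\b1$'' really are consequences of the absence of an infinite orthogonal family of projections. Step (2) is routine, the only mild point being that one should not try to separate all points of $\sigma(a)$, which may have accumulation points, but merely to extract infinitely many disjoint pieces.
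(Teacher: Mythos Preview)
Your proof is correct and follows essentially the same route as the paper's: obtain a self-adjoint element with infinite spectrum, then apply functional calculus with disjointly supported bump functions to generate the $B_n$. The only difference is that for step~(1) the paper simply invokes \cite[4.6.14]{KR}, whereas you supply a self-contained minimal-projection argument for the same fact.
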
   %1.5

\begin{proof}
By \cite[4.6.14]{KR} $A$ contains a self-adjoint element $h$ whose spectrum, $\sigma(h)$, is infinite. Therefore $\sigma(h)$ contains a cluster point $x_0$. Then there is a sequence $\{x_n\}\subset\sigma(h)$ such that $x_n\ne x_0$, $x_n\ne x_m$ for $n\ne m$, $x_n\ne 0$, and $\{x_n\}$ converges to $x_0$. It is a routine exercise to find mutually disjoint open sets $U_n$ such taht $x_n\in  U_n$. For each $n$ find a continuous function $f_n$ such that $f_n(x_n)\ne 0$, $f_n(0)=0$, and $f_n(x)=0$ for $x$ not in $U_n$. Then let $B_n$ be the hereditary $\calg$ generated by $f_n(h)$.
\end{proof}

\section{Results and Concluding Remarks.}  % 2

Throughout this section, up to and including the proof of theorem 2.4, $A$ is a $\sigma$--unital $\calg$, $e$ is a strictly positive element of $A$, and the notations of the previous section apply. The following is the main lemma.

\begin{lem}   % 2.1
If $\ep>0$ and $\{B_n\}$ is an infinite sequence of mutually orthogonal hereditary $\calgs$ of $B_\ep$ such that $B_n\cap I_{\frac 1n}\ne\{0\}$, $\forall n$, then $QM(A)\ne M(A)$. 
\end{lem}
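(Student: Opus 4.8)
The goal is to construct a quasi-multiplier that is not a multiplier, and the hypotheses hand us exactly the right raw material: a sequence $\{B_n\}$ of mutually orthogonal hereditary subalgebras inside $B_\ep$, each meeting the ideal $I_{1/n}$ nontrivially. The plan is to pick, for each $n$, a nonzero element $b_n \in B_n \cap I_{1/n}$, rescale so that $\|b_n\|=1$, and then build $T$ as a sum $\sum_n \lambda_n x_n$ where the $x_n$ lie in the ``off-diagonal'' spaces $X(p_n, q_n)$ associated to appropriate open projections, with coefficients $\lambda_n \to \infty$. Concretely, since $b_n \in I_{1/n}$, one can find $a_n \in A$ with $a_n c_n b_n \neq 0$ for suitable $c_n \in C_{1/n}$; the product $a_n$ can be arranged to have norm $1$ and to be ``spread out'' so that $\{a_n\}$ does not converge to anything in $A$. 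The element $T = \sum \lambda_n a_n'$ (with $a_n'$ a suitably truncated/normalized version) will then fail to be a multiplier because $eT$ or $Te$ will not lie in $A$ — the coefficients $\lambda_n$ blow up while the supports stay orthogonal — yet $aTb \in A$ for all $a,b$ because sandwiching by elements of $A$ forces convergence.

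Here is the mechanism in more detail. The mutual orthogonality of the $B_n$ means the open projections $q_n$ (supports of the $B_n$ inside $B_\ep$) are mutually orthogonal, and since $B_n$ meets $I_{1/n}$, the central cover of $q_n$ is not disjoint from the open projection $r_n$ of $C_{1/n}$. Thus $X(r_n, q_n) \neq \{0\}$ by the fact recorded in the Preliminaries (that $X(p,q)=\{0\}$ iff $p,q$ are centrally disjoint). Choose $0 \neq y_n \in X(r_n, q_n)$ with $\|y_n\|=1$. Because the $q_n$ are mutually orthogonal and the $C_{1/n}$ shrink down toward the ``zero end'' of the spectrum of $e$, the elements $y_n$ have mutually orthogonal right supports and left supports concentrated near $0$. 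Now set $T = \sum_n n\, y_n$ (convergence in $A^{**}$, e.g. strongly, using orthogonality of supports to control partial sums). For $a, b \in A$: writing $a = e^{1/2}a'$, $b = b' e^{1/2}$ approximately (density of $eA, Ae$), one checks $a y_n b \to 0$ rapidly — the point being that $y_n$ is supported spectrally in $(0,1/n)$ on the left, so $\|e^{1/2} y_n\| \to 0$, which beats the linear growth of the coefficient $n$. Hence $aTb \in A$, i.e. $T \in QM(A)$. On the other hand, $eT = \sum n\, e y_n$, and one shows this is not in $A$: if it were, then by compactness/approximate-identity arguments the tail $\sum_{n \geq N} n \, e y_n$ would have to be small, but each summand has norm bounded below by a fixed multiple of $n \cdot (\text{something not too small})$ — more carefully, one picks the $y_n$ so that $\|e y_n\|$ does not go to zero faster than $1/n$, forcing $\|e y_n \cdot n\|$ bounded below, contradicting that an element of $A$ compressed to orthogonal pieces must have those pieces tending to zero. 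So $T \notin M(A)$ (or at least $T \notin RM(A)$, hence $\notin M(A)$).

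The step I expect to be the main obstacle is the quantitative control in two competing directions simultaneously: I need $\|e y_n\|$ (or the relevant one-sided product) bounded \emph{below} by roughly $c/n$ so that $n \cdot y_n$ fails to be a right multiplier, while at the same time I need the two-sided products $a y_n b$ to go to zero \emph{faster} than $1/n$ so that $T$ is a genuine quasi-multiplier. Reconciling these requires choosing $y_n$ with left support sitting in an annulus $(\delta_n, 1/n)$ for a carefully chosen $\delta_n > 0$ (not all the way down to $0$), using that $b_n \in I_{1/n}$ only guarantees interaction with $C_{1/n} = \chi_{(0,1/n)}(e)$-stuff but we get to choose \emph{which} part. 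The trick is that one-sided multiplication by $e$ sees $\|e y_n\| \approx$ (left-support location) $\approx 1/n$, giving the lower bound, whereas two-sided multiplication by arbitrary $a, b \in A$ can be approximated by multiplication by $f(e)$ on both sides with $f$ vanishing at $0$, and $\|f(e) y_n\| \to 0$ because $f(e)$ restricted to the shrinking annuli tends to $0$ uniformly; to get this to beat $n$, one further truncates, replacing $y_n$ by $y_n$ compressed to an even smaller spectral window depending on a diagonal argument over a countable dense approximating set for $A$. This diagonalization — ensuring a single $T$ works against all $a, b$ simultaneously — is the technical heart, and it is where $\sigma$-unitality (one strictly positive $e$ controlling everything) is essential.
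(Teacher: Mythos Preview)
Your plan has the asymmetry of the situation backwards, and this is a genuine gap rather than a technical wrinkle.

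Because every $B_n$ sits inside $B_\ep$, any element $T$ you build as a sum of pieces $y_n \in X(r_n, q_n)$ satisfies $Tf(e)=T$ whenever $f$ is continuous with $f(0)=0$ and $f\equiv 1$ on $[\ep,\infty)$ (the right support of each $y_n$ lies under $\chi_{(\ep,\infty)}(e)$). Consequently, if $T\in QM(A)$ then for every $a\in A$ we have $aT=aTf(e)\in A$, i.e.\ $T\in RM(A)$ automatically. So your intended conclusion ``$T\in QM(A)$ but $eT\notin A$'' is self-contradictory for this kind of $T$. There is also a boundedness problem: with $\|y_n\|=1$ and only the right supports orthogonal, one has $\|\sum_{n\le N} n\,y_n\|^2=\|\sum_{n\le N} n^2 y_n^*y_n\|=N^2$, so $\sum n\,y_n$ does not define an element of $A^{**}$ at all.

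The paper exploits the same asymmetry in the correct direction and thereby avoids both the growing coefficients and the diagonalization you anticipated. One passes to disjoint spectral annuli on the left: recursively choose $n_k\to\infty$ with $\sum 1/n_k<\infty$ and $\theta_k\in(0,1/n_k)$ with $1/n_{k+1}<\theta_k$, and pick unit vectors $a_k\in X(\chi_{(\theta_k,1/n_k)}(e),\,q_{n_k})$. Now the $a_k$ are mutually orthogonal on \emph{both} sides, so $T=\sum a_k$ is a norm-$1$ element of $A^{**}$. Since $\|ea_k\|\le 1/n_k$ is summable, $eT\in A$ and hence $T\in RM(A)\subset QM(A)$ --- no two-sided estimate or diagonal argument needed. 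Finally $T\notin LM(A)$: from $Tf(e)=T$ it would follow that $T\in A$, but choosing bump functions $f_k$ equal to $1$ on $[\theta_k,1/n_k]$ and vanishing off $(0,2/n_k)$ gives $f_k(e)b\to 0$ for all $b\in A$ while $\|f_k(e)T\|\ge\|a_k\|=1$. Thus $T\in QM(A)\setminus M(A)$. The ``competing quantitative constraints'' you worried about disappear once you aim for $RM\setminus LM$ rather than $QM\setminus RM$.
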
  % 2.1

\begin{proof}
Let $p_n=\chi_{(0,\frac 1n})(e)$ and let $q_n$ be the open projection for $B_n$. Since $B_n\cap I_{\frac 1n}\ne\{0\}$ and the central cover of $p_n$ is the open projection for $I_{\frac 1n}$, $p_n$ and $q_n$ are not centrally disjoint in $A^{**}$. Therefore $X(p_n, q_n)\ne \{0\}$. Since $\chi_{(\theta,\frac 1n)}(e)$ converses to $p_n$ as $\theta\searrow 0$, it follows that also $X(\chi_{(\theta,\frac 1n)}(e), q_n)\ne 0$ for $\theta$ sufficiently small. Then we can recursively choose $n_k$ and $\theta_k$ such that $n_k\to\infty$, $\sum_1^\infty$ $\frac {1}{n_k}<\infty$, $0<\theta_k<\frac {1}{n_k}$, $\frac {1}{n_{k+1}}<\theta_k$, and $X(\chi_{(\theta_k, \frac{1}{n_k}})(e), q_{n_k})\ne \{0\}$. Choose $a_k\in X(\chi_{(\theta_k, \frac{1}{n_k}})(e), q_{n_k})$ such that $\|a_k\|=1$. Then the $a_k$'s are mutually orthogonal in the sense that $a_k^*a_l=a_ka^*_l=0$ for $k\ne l$. Thus $T=\sum^\infty_1 a_k$ exists in $A^{**}$.  Since $eT=\sum^\infty_1 e a_k$ and $\|e a_k\|\le \frac{1}{n_k}$, then $e T\in A$. Since $Ae$ is dense in $A$, this implies that $T\in RM(A)\subset QM(A)$. We claim that $T\not\in M(A)$. If $f$ is a continuous function such that $f(0)=0$ and $f(x)=1$ for $x\ge \ep$, then $T f(e)=T$. Thus $T\in M(A)$ implies $T\in A$. 
To see that this is not so, choose continuous functions $f_k$ such that $f_k(0)=0$, $f_k(x)=1$ for $\theta_k\le x\le\frac{1}{n_k}$, $f_k(x)=0$ for $x\ge \frac {2}{n_k}$, and $\|f_k\|=1$. Then $f_k(e)e\to 0$ and hence $f_k(e)b\to 0$ , $\forall b\in A$ (since $eA$ is dense in $A$). But $\|f_k(e)T\|\ge \|f_k(e)a_k\|=\|a_k\|=1$. 
\end{proof}

\begin{lem}   % 2.2
If $QM(A)=M(A)$ and $I=\bigcap^\infty_1 I_{\frac 1n}$, then $I$ is a dual $\calg$ and a direct summand of $A$.
\end{lem}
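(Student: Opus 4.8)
The plan is to prove both assertions from a single fact: \emph{if $QM(A)=M(A)$ then $I\cap B_\epsilon$ is finite--dimensional for every $\epsilon>0$}. First I would record two easy structural points. Since $(0,\frac1{n+1})\subset(0,\frac1n)$, the hereditary subalgebras $C_{\frac1n}$ decrease, hence so do the ideals $I_{\frac1n}$, so $I=\bigcap_1^\infty I_{\frac1n}$ is a genuinely decreasing intersection and in particular an ideal. Second, choosing continuous functions $g_\epsilon$ on $[0,\infty)$ with $g_\epsilon(0)=0$, $0\le g_\epsilon\le1$, $g_\epsilon\equiv1$ on $[\epsilon,\infty)$ and supported in $[\epsilon/2,\infty)$, the net $\{g_\epsilon(e)\}$ is an approximate identity for $A$ (standard, since $e$ is strictly positive) lying in the subalgebras $B_{\epsilon/2}$; hence for $a\in I$ we have $g_\epsilon(e)\,a\,g_\epsilon(e)\in I\cap B_{\epsilon/2}$ and $g_\epsilon(e)\,a\,g_\epsilon(e)\to a$, so $I=\bigl(\bigcup_{\epsilon>0}(I\cap B_\epsilon)\bigr)^-$.

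The heart of the argument, and the step I expect to be the main obstacle, is the finite--dimensionality claim. Suppose $I\cap B_\epsilon$ is infinite--dimensional. By Lemma 1.5 it contains an infinite sequence $\{D_n\}$ of mutually orthogonal nonzero hereditary $\salgs$. A hereditary $\salg$ of the hereditary $\salg$ $I\cap B_\epsilon$ of $A$ is again hereditary in $A$ (transitivity of ``hereditary''), and, being contained in $I\cap B_\epsilon$, is in particular a hereditary $\salg$ of $B_\epsilon$; moreover $D_n\subseteq I\subseteq I_{\frac1n}$, so $D_n\cap I_{\frac1n}=D_n\ne\{0\}$. Thus $\{D_n\}$ satisfies the hypotheses of Lemma 2.1 with this same fixed $\epsilon$, so $QM(A)\ne M(A)$, a contradiction. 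The care needed here is purely bookkeeping: verifying the transitivity so that the output of Lemma 1.5 can be fed into Lemma 2.1, and noticing that the single $\epsilon$ required by Lemma 2.1 is exactly the one we started with.

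Granting that $I\cap B_\epsilon$ is finite--dimensional for all $\epsilon>0$, the algebra $I$ is dual. Indeed each $I\cap B_\epsilon$ is a finite--dimensional hereditary $\salg$ of $I$, so it equals its own socle and its minimal projections are minimal in $I$; hence $\mathrm{soc}(I)\supseteq\bigcup_{\epsilon>0}(I\cap B_\epsilon)$ is dense in $I$, and a $\calg$ with dense socle is dual. (Alternatively: the ideal of $I$ generated by $I\cap B_\epsilon$ is a finite direct sum of elementary $\calgs$, so $\prim I$ is an increasing union of finite clopen subsets, hence discrete, and $I$ — being CCR with discrete spectrum — is the corresponding $c_0$--direct sum of elementary $\calgs$.)

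Finally, $I$ is a direct summand. Let $q$ be the open projection of $I$; it is central in $A^{**}$. Since $q$ is central and $\chi_{(\epsilon,\infty)}(e)$ is the open projection of $B_\epsilon$, the open projection of $I\cap B_\epsilon$ is $q\wedge\chi_{(\epsilon,\infty)}(e)=q\chi_{(\epsilon,\infty)}(e)$; as $I\cap B_\epsilon$ is finite--dimensional this projection, call it $r_\epsilon$, is the identity of $I\cap B_\epsilon$ and so lies in $A$, whence $\b1-r_\epsilon$ is open. Because $r_\epsilon\le\chi_{(\epsilon,\infty)}(e)$ the projections $r_\epsilon$ and $\chi_{(\epsilon,\infty)}(e)$ commute, so $s_\epsilon:=\chi_{(\epsilon,\infty)}(e)-r_\epsilon=(\b1-q)\chi_{(\epsilon,\infty)}(e)$ is a product of commuting open projections and hence is open. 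Letting $\epsilon\searrow0$, $\chi_{(\epsilon,\infty)}(e)\nearrow\b1$ (the kernel projection of $e$ is $0$), so $s_\epsilon\nearrow\b1-q$, and therefore $\b1-q$ is open. Being open and central, $\b1-q$ is the open projection of an ideal $J$; then $I\cap J=\{0\}$ and $(I+J)^-=A$, and $I+J$ is already closed because $I$ and $J$ are orthogonal ideals, so $A=I\oplus J$.
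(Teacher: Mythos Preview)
Your argument is correct and, through the point where $I$ is shown to be dual, matches the paper's proof: both invoke Lemma~1.5 inside an assumed infinite-dimensional $I\cap B_\ep$, feed the resulting orthogonal hereditary subalgebras into Lemma~2.1 (using $D_n\subset I\subset I_{1/n}$), and then read off duality from the fact that the ideal generated by each finite-dimensional $I\cap B_\ep$ is a finite sum of elementaries.

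For the direct-summand step the two proofs diverge. The paper observes that each $z\chi_{(\ep,\infty)}(e)$ is a finite-rank projection in the dual algebra $I$, so by spectral theory $ze\in I\subset A$; since $eA$ is dense and $z$ is central this gives $z\in M(A)$ in one stroke. Your route instead shows $\b1-q$ is open as the increasing limit of the open projections $s_\ep=(\b1-r_\ep)\,\chi_{(\ep,\infty)}(e)$. This is valid --- the decisive point being that $r_\ep\in A$, hence $\b1-r_\ep\in M(A)$, so its product with a commuting open projection is certainly open --- but the paper's $ze\in A$ argument is shorter and sidesteps the open-projection calculus entirely. One wording issue worth fixing: displaying $s_\ep=(\b1-q)\chi_{(\ep,\infty)}(e)$ and then calling it ``a product of commuting open projections'' reads circularly, since the openness of $\b1-q$ is precisely what you are proving; the factorization you actually use is $(\b1-r_\ep)\,\chi_{(\ep,\infty)}(e)$, and it would be clearer to say so.
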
   %2.2

\begin{proof}
If $\ep>0$, then it is impossible to find infinitely many non-zero mutually orthogonal hereditary $C^*$--subalgebras of $B_\ep \cap I$. By Lemma 1.5 $B_\ep\cap I$ is finite dimensional. This implies that id$\, (B_\ep\cap I)$, the ideal generated by $B_\ep\cap I$, is the direct sum of finitely many elementary $\calgs$. Since $I$ is the limit of id$\, (B_\ep\cap I)$ as $\ep\searrow 0$, it follows that $I$ is dual. If $z$ is the open central projection for $I$, the fact that $B_\ep\cap I$ is finite dimensional implies that $z\chi_{(\ep,\infty)}(e)$ is a finite rank projection in the dual $\calg\, I$. It then follows from spectral theory that $ze\in I$. This implies that $z\in M(A)$, whence $I$ is a direct summand of $A$. 
\end{proof}

\begin{lem}   %2.3
If $QM(A)=M(A)$ and $\bigcap^\infty_1I_{\frac 1n}=\{0\}$, then $\forall \ep>0, \exists \delta>0$ such that $I_\delta B_\ep=\{0\}$. 
\end{lem}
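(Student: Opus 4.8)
The plan is to argue by contradiction: suppose that for some $\ep>0$ we have $I_\delta B_\ep\ne\{0\}$ for every $\delta>0$. The goal is then to manufacture exactly the configuration forbidden by Lemma 2.1, namely an infinite sequence of mutually orthogonal non-zero hereditary subalgebras $\{B_n\}$ of $B_\ep$ with $B_n\cap I_{1/n}\ne\{0\}$, which together with $QM(A)=M(A)$ would contradict Lemma 2.1. So the whole content is a recursive construction.

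First I would translate the hypothesis $I_\delta B_\ep\ne\{0\}$ into a statement about open projections. Writing $p_\delta$ for the open central projection of $I_\delta$ (the central cover of $\chi_{(0,\delta)}(e)$) and $r_\ep$ for the open projection $\chi_{(\ep,\infty)}(e)$ of $B_\ep$, the condition $I_\delta B_\ep\ne\{0\}$ says $p_\delta r_\ep\ne 0$, equivalently $r_\ep$ is not centrally disjoint from $\chi_{(0,\delta)}(e)$, equivalently $X(\chi_{(0,\delta)}(e),\,r_\ep)\ne\{0\}$, i.e. there is a non-zero $a\in A$ with $a^*a\in B_\ep$ and $aa^*\in C_\delta$. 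Actually what I really want is a small non-zero hereditary subalgebra of $B_\ep$ that still meets $I_\delta$; one gets this by taking the hereditary subalgebra generated by $a^*a$, which sits inside $B_\ep$ and lies in $I_\delta$ since $a^*a$ and $aa^*$ generate Rieffel-equivalent ideals (or just: $a\in \mathrm{id}(aa^*)\subset I_\delta$, so $a^*a\in I_\delta$). The key point is that because $\bigcap_n I_{1/n}=\{0\}$, the ideals $I_\delta$ shrink, so I can extract a \emph{new} piece of $B_\ep$ sitting in $I_\delta$ that is disjoint from finitely many previously chosen pieces.

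The recursion goes as follows. Suppose $B_1,\dots,B_{n-1}$ have been chosen: mutually orthogonal non-zero hereditary subalgebras of $B_\ep$, with $B_k\cap I_{1/k}\ne\{0\}$ and moreover $B_k\subset I_{\delta_k}$ for some $\delta_k>0$. Let $J=\mathrm{id}(B_1+\cdots+B_{n-1})$, which is contained in $I_{\delta}$ for $\delta=\min_k\delta_k>0$; in particular $J\subsetneq A$ does not contain all of $B_\ep$ because, if it did, then $B_\ep\subset I_\delta$, but $I_\delta B_\ep=\{0\}$ would force $B_\ep=B_\ep^2\subset I_\delta B_\ep=\{0\}$ — wait, that is the wrong direction. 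Let me instead use: the hypothesis gives some non-zero $a_n\in A$ with $a_n^*a_n\in B_\ep$, $a_na_n^*\in C_{1/n}$ (so $a_n^*a_n\in I_{1/n}$), and I need $a_n^*a_n$ orthogonal to $B_1,\dots,B_{n-1}$. To get orthogonality, note the previous pieces live in $I_{\delta_k}\subset I_{\delta'}$ for a fixed $\delta'>0$ (choosing $\delta'\le\min\delta_k$), while I am free to choose the new piece inside $I_{1/n}$ with $1/n<\delta'$; since $B_n$ is a hereditary subalgebra of $B_\ep\cap I_{1/n}$ and the $I_\delta$ need not be orthogonal to each other, this alone does not give orthogonality. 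So the honest approach is: work inside the hereditary subalgebra $B_\ep\ominus \overline{B_\ep J B_\ep}$ (the part of $B_\ep$ orthogonal to $J$), which is non-zero precisely because $B_\ep\not\subset J$ — and $B_\ep\not\subset J$ holds because $J\subset I_{\delta'}$ while $I_{\delta'}\cap B_\ep$ is a \emph{proper} hereditary subalgebra of $B_\ep$ (if $I_{\delta'}\supset B_\ep$ then iterating $I_{\delta'}\cap B_\ep = B_\ep$ and descending $\delta'$ gives $B_\ep\subset\bigcap I_{1/n}=\{0\}$, absurd unless $B_\ep=\{0\}$, and $B_\ep\ne\{0\}$ since $e\ne 0$). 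Hmm, this still needs $B_\ep\not\subset I_{\delta'}$, which I should extract more carefully; see below.

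The main obstacle — and where I would spend the real effort — is precisely this orthogonalization step: ensuring the new piece can be chosen both inside $I_{1/n}$ and orthogonal to the finitely many earlier pieces. The clean way is to fix the earlier pieces so that $J_{n-1}:=\mathrm{id}(B_1+\cdots+B_{n-1})$ is contained in some $I_{\delta'}$, then observe that the hypothesis ``$I_\delta B_\ep\ne\{0\}$ for all $\delta$'' applied with $\delta=1/n<\delta'$ produces a non-zero hereditary subalgebra $D$ of $B_\ep$ lying in $I_{1/n}$; if $D$ is not already orthogonal to $J_{n-1}$, replace $B_\ep$ throughout by $B_\ep'=B_\ep\cap J_{n-1}^\perp$ (a hereditary subalgebra of $B_\ep$, still non-zero because $\bigcap I_\delta=\{0\}$ prevents $B_\ep\subset J_{n-1}\subset I_{\delta'}$) and re-run the hypothesis with $B_\ep'$ in place of $B_\ep$ — but wait, the hypothesis was stated for the specific $B_\ep$. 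So instead: show directly that $I_{1/n}B_\ep'\ne\{0\}$, using that $I_{1/n}B_\ep\ne\{0\}$ and that the part of $B_\ep$ killed by $J_{n-1}^\perp$ lies in $I_{\delta'}$ while $I_{1/n}\subset I_{\delta'}$ and $I_{\delta'}B_\ep=\{0\}$ — no. Ultimately the correct mechanism is that $\overline{B_\ep J_{n-1} B_\ep}\subset I_{\delta'}$ and $I_{\delta'}B_\ep=\{0\}$ would be needed but we don't have it; rather, $\overline{B_\ep J_{n-1}B_\ep}$ is a proper ideal of $B_\ep$ because its closure, being inside $I_{\delta'}\cap B_\ep$, cannot be all of $B_\ep$ (iterate and use $\bigcap=\{0\}$). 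Then $B_\ep':=B_\ep$-complement of that ideal is a non-zero hereditary subalgebra, orthogonal to $J_{n-1}$, and one checks $I_{1/n}B_\ep'\ne\{0\}$ by a central-cover computation: the open central projection of $I_{1/n}$ dominates $\chi_{(0,1/n)}(e)$, whose central cover meets $r_\ep$ by hypothesis, and enough of that meeting survives inside $B_\ep'$ because only a piece inside $I_{\delta'}$ was removed. This last verification is the technical heart; modulo it, picking $a_n$ of norm one in $X(\chi_{(0,1/n)}(e),\, r_{B_\ep'})\ne\{0\}$ and letting $B_n$ be the hereditary subalgebra generated by $a_n^*a_n$ completes the recursion, and Lemma 2.1 delivers the contradiction.
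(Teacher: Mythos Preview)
Your overall strategy is right: assume $I_\delta B_\ep\ne\{0\}$ for all $\delta>0$, build mutually orthogonal non-zero hereditary $\salgs$ $B_n\subset B_\ep$ with $B_n\cap I_{1/n}\ne\{0\}$, and invoke Lemma~2.1 for a contradiction. You have also correctly isolated the difficulty: at stage $n$ one must find a new piece of $B_\ep$ that both lies in $I_{1/n}$ \emph{and} is orthogonal to the previously chosen pieces.

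The gap is exactly where you flag it. Your argument for $I_{1/n}B_\ep'\ne\{0\}$ (``only a piece inside $I_{\delta'}$ was removed'') does not work, and in fact the heuristic points the wrong way. You arrange $1/n<\delta'$ so that $I_{1/n}\subset I_{\delta'}$; hence the piece of $B_\ep$ you \emph{want} (namely $B_\ep\cap I_{1/n}$) sits inside $I_{\delta'}\cap B_\ep$, the very region from which you have just excised $\overline{B_\ep J_{n-1}B_\ep}$. Nothing prevents $J_{n-1}$ from already swallowing all of $B_\ep\cap I_{1/n}$, and no ``central-cover computation'' repairs this: the central cover of $\chi_{(0,1/n)}(e)$ meeting $r_\ep$ only tells you $I_{1/n}\cap B_\ep\ne\{0\}$, not that any of it lies outside $J_{n-1}$. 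Similarly, your argument that $B_\ep'\ne\{0\}$ (``iterate and use $\bigcap=\{0\}$'') only shows $B_\ep\not\subset\bigcap_m I_{1/m}$, not $B_\ep\not\subset I_{\delta'}$ for the particular $\delta'$ produced by the recursion.

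The paper resolves this with a dichotomy you are missing. First reduce (via a short stabilization argument) to the case that $B_\ep\cap I_\delta$ is infinite dimensional for every $\delta>0$. Then split into two cases. \textbf{Case~1:} for some $\delta>0$ the ideal $B_\ep\cap I_\delta\cap I_{1/n}$ is essential in $B_\ep\cap I_\delta$ for every $n$. Then Lemma~1.5 supplies infinitely many mutually orthogonal non-zero hereditary $\salgs$ of $B_\ep\cap I_\delta$, and essentiality forces each to meet $I_{1/n}$; Lemma~2.1 applies. \textbf{Case~2:} for every $\delta>0$ some $B_\ep\cap I_\delta\cap I_{1/n}$ fails to be essential in $B_\ep\cap I_\delta$. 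Then one recursively chooses $n_k\nearrow\infty$ so that $B_\ep\cap I_{1/n_{k+1}}$ is not essential in $B_\ep\cap I_{1/n_k}$, and takes $B_k$ to be a non-zero hereditary $\salg$ of $B_\ep\cap I_{1/n_k}$ orthogonal to $I_{1/n_{k+1}}$. Since $B_l\subset I_{1/n_l}\subset I_{1/n_{k+1}}$ for $l>k$, the $B_k$ are mutually orthogonal, and $B_k\subset I_{1/n_k}$ gives the required intersection; Lemma~2.1 again applies. The point is that the essential/non-essential split is precisely what decouples ``orthogonal to the earlier pieces'' from ``meets $I_{1/n}$'', the tension your direct recursion cannot break.
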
   %2.3

\begin{proof}
If $I_\delta\cap B_\ep$ is finite dimensional for some $\delta>0$, then dim$\,(I_{\frac 1n}\cap B_\ep)$ must stabilize at some finite value as $n$ increases. It follows that the set $I_{\frac 1n}\cap B_\ep$ also stabilizes, and hence $I_{\frac 1n}\cap B_\ep=\{0\}$ for $n$ sufficiently large. Therefore we may assume $I_\delta\cap B_\ep$ is infinite dimensional, $\forall \delta>0$. 

\underbar{Case 1.}\ There is $\delta>0$ such that $B_\ep\cap I_\delta\cap I_{\frac 1n}$, is an essential ideal of $B_\ep\cap I_\delta$, $\forall n$. Then, using Lemma 1.5, choose a sequence $\{B_n\}$ of non-zero mutually orthogonal hereditary $C^*$--subalgebras of $B_\ep\cap I_\delta$. By the essential property $B_n\cap I_{\frac 1n}\ne 0$, $\forall n$. So Lemma 2.1 gives a contradiction. 

\underbar{Case 2.}\ For each $\delta>0$, there is $n$ such that $B_\ep\cap I_\delta\cap I_{\frac 1n}$ is not essential in $B_\ep\cap I_\delta$. Then we can construct $n_k$ recursively so that $n_{k+1}>n_k$ and $B_\ep\cap I_{\frac {1}{n_{k+1}}}$ is not essential in $B_\ep\cap I_{\frac {1}{n_k}}$. Then for each $k$ there is a non-zero hereditary $C^*$--subalgebra $B_k$ of $B_\ep\cap I_{\frac {1}{n_k}}$ such that $B_k$ is orthogonal to $I_{\frac {1}{n_{k+1}}}$. Again Lemma 2.1 produces a contradiction.
\end{proof}

\begin{thm}  % 2.4
If $A$ is a $\sigma$-unital $\calg$, then $QM(A)=M(A)$  if and only if $A$ is the direct sum of a dual $\calg$ and a localle unital $\calg$. 
\end{thm}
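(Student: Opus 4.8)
The plan is to combine the three lemmas of this section with Propositions~1.1 and~1.3 and with the elementary behaviour of $M$ and $QM$ under $C^*$-direct sums. First I would record that if $A=A_1\oplus A_2$ then, using $A^{**}=A_1^{**}\oplus A_2^{**}$ and the fact that the two coordinates of an element of $A$ may be varied independently, $QM(A)=QM(A_1)\oplus QM(A_2)$ and $M(A)=M(A_1)\oplus M(A_2)$; since $M\subseteq QM$ always, this gives $QM(A)=M(A)$ if and only if $QM(A_i)=M(A_i)$ for $i=1,2$. As $QM(A)=M(A)$ for dual $A$ (noted in the introduction), the sufficiency half of the theorem reduces to showing $QM(A)=M(A)$ when $A$ is locally unital.

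For that case, let $\{I_j\}$ and $\{u_j\}\subseteq A$ witness local unitality and take $T\in QM(A)$. Because $T$ is bounded and $A$ is closed, it suffices to prove $Ta\in A$ and $aT\in A$ for $a$ in the dense subspace $\sum_j I_j$, hence for $a\in I_j$ for each $j$. For such $a$ put $S=u_jTu_j$, which lies in $A$ since $T\in QM(A)$. As $I_j^{**}$ is a weak-$*$-closed ideal of $A^{**}$ we have $a\in I_j^{**}$, hence $Ta,\,aT\in I_j^{**}$; and $(\b1-u_j)I_j=\{0\}$ extends by weak-$*$ continuity to $(\b1-u_j)I_j^{**}=\{0\}$, similarly on the right. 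Combining this with $u_ja=au_j=a$ yields $Ta=u_j(Ta)=u_jTu_ja=Sa\in A$ and $aT=(aT)u_j=au_jTu_j=aS\in A$, which proves sufficiency.

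For necessity, assume $QM(A)=M(A)$ and set $I=\bigcap_{n=1}^\infty I_{1/n}$. By Lemma~2.2, $I$ is dual and a direct summand, $A=I\oplus A'$, with central projection $z\in M(A)$ and $ze\in A$. Then $A'$ is $\sigma$-unital, $QM(A')=M(A')$ by the splitting above, $(\b1-z)e$ is strictly positive in $A'$, and the subalgebras $B_\ep,C_\ep$ and ideals $I_\ep$ for $A'$ are simply $(\b1-z)$ times the corresponding objects for $A$; in particular the analogue of $\bigcap_n I_{1/n}$ for $A'$ is $(\b1-z)I=\{0\}$. So I may replace $A$ by $A'$ and must show: a $\sigma$-unital $A$ with $QM(A)=M(A)$ and $\bigcap_n I_{1/n}=\{0\}$ is locally unital. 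Given $\ep>0$, Lemma~2.3 yields $\delta=\delta(\ep)>0$ with $I_\delta B_\ep=\{0\}$; set $J_\ep=\text{id}(B_\ep)$. Since $J_\ep=\overline{AB_\ep A}$ and $I_\delta$ is an ideal, $I_\delta B_\ep=\{0\}$ forces $I_\delta J_\ep=\{0\}$. Next I claim $A/I_\delta$ is unital: the image $\bar e$ of $e$ is strictly positive in $A/I_\delta$, so has zero kernel projection in $(A/I_\delta)^{**}$; and for $t\in(0,\delta)$ a continuous $f\ge 0$ supported in $(0,\delta)$ with $f(t)=1$ satisfies $f(e)\in C_\delta\subseteq I_\delta$, so $f(\bar e)=0$ and hence $t\notin\sigma(\bar e)$. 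Thus $\sigma(\bar e)\subseteq\{0\}\cup[\delta,\|e\|]$, the point $0$ being excluded since it would otherwise be isolated, so $\bar e$ is invertible in $(A/I_\delta)^\sim$ and $\b1=\bar e\,g(\bar e)\in A/I_\delta$ for a suitable $g\in C_0((0,\infty))$. Proposition~1.1 applied to $J_\ep$ and $I_\delta$ then produces $u_\ep\in A$ with $(\b1-u_\ep)J_\ep=J_\ep(\b1-u_\ep)=\{0\}$, and $\overline{\sum_\ep J_\ep}=A$ because $\overline{\bigcup_\ep B_\ep}$ is a hereditary subalgebra containing $e=\lim_{\ep\to 0}(e-\ep)_+$. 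Hence $\{J_\ep\}_{\ep>0}$ shows $A$ is locally unital, and the original algebra is $I\oplus A'$, a direct sum of a dual and a locally unital $\calg$.

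I expect the one genuinely non-formal step in this assembly to be the proof that each quotient $A/I_\delta$ is unital; its key inputs are that a strictly positive element has zero kernel projection and that $C_\delta\subseteq I_\delta$ annihilates the part of $\sigma(\bar e)$ inside $(0,\delta)$. The remaining ingredients (the direct-sum bookkeeping and the transfer of the spectral data of $A$ to the summand $A'$) are routine, and the deep content has already been placed in Lemmas~2.1--2.3.
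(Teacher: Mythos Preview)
Your proof is correct and follows essentially the same route as the paper's: Lemma~2.2 to split off the dual summand, Lemma~2.3 plus Proposition~1.1 to exhibit the local unitality of the complement via the ideals $J_\ep=\text{id}(B_\ep)$, and the weak-$*$ closure argument $(\b1-u_j)I_j^{**}=\{0\}$ for the converse. The only cosmetic difference is your verification that $A/I_\delta$ is unital via the spectrum of $\bar e$, whereas the paper simply observes that $f(e)$ maps to a unit whenever $f$ is continuous with $f(x)=1$ for $x\ge\delta$ (since $(\b1-f(e))e\in C_\delta\subset I_\delta$); both arguments are valid.
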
   %2.4

\begin{proof}
Assume $QM(A)=M(A)$. By Lemma 2.2 $A=I\oplus A_1$ where  $I$ is a dual $\calg$ and $A_1$ satisfies the hypothesis of Lemma 2.3. For  each $\delta>0$, $A_1/ I_\delta$ is unital, since $f(e)$ maps to a unit for $A_1/I_\delta$ for any continuous function $f$ such that $f(x)=1$ for $x\ge\delta$. So if $J_\ep$ is the ideal generated  by $B_\ep$ and $I_\delta B_\ep=\{0\}$, then by Lemma 1.1, there is $u$ in $A$ such that $(\b1-u)J_\ep=J_\ep(\b1-u)=\{0\}$. Since $A_1=(\sum J_{\frac 1n})^-$, we have shown that $A_1$ is locally unital. 

Now assume $A=A_0\oplus A_1$ where $A_0$  is dual and $A_1$ is locally unital. (For this part we don't need $\sigma$--unitality.) Then $QM(A_0)=M(A_0)$, since $A_0$ is an ideal in $A_0^{**}$. Let $A_1=(\sum J_\alpha)^-$, as in the definition of locally unital. For each $\alpha$, the weak closure of $I_\alpha$ in $A_1^{**}$ will be denoted by $I_\alpha^{**}$ (to which it is isomorphic). Then $I_\alpha^{**}$ is an ideal in $A_1^{**}$. If $(u_\alpha-\b1)I_\alpha=\{0\}$, then also $(u_\alpha-\b1)I_\alpha^{**}=\{0\}$. If $T\in QM(A_1)$ and $x\in I_\alpha$, then since $Tx\in I_\alpha^{**}$, $Tx=u_\alpha Tx\in A_1$. Therefore $T\in LM(A)$. A symmetrical proof shows also that $T\in RM(A)$. 
\end{proof}

\begin{cor}   %2.5
If $A$ is a $\sigma$-unital simple $\calg$, then $QM(A)=M(A)$ if and only if $A$ is either elementary or unital.
\end{cor}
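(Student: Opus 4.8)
The plan is to deduce the corollary from Theorem 2.4 together with the fact that a simple $\calg$ has no nontrivial two-sided ideals.

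First I would treat the ``only if'' direction. Assuming $QM(A)=M(A)$, Theorem 2.4 gives a decomposition $A=A_0\oplus A_1$ with $A_0$ dual and $A_1$ locally unital. Since $A$ is simple and $A_0$, $A_1$ are ideals of $A$, one of them must be $\{0\}$, so $A=A_0$ or $A=A_1$. If $A=A_0$, then $A$ is a simple dual $\calg$; as a dual $\calg$ is by definition a direct sum of elementary $\calgs$, simplicity forces a single summand and $A$ is elementary. If $A=A_1$, then $A$ is simple and locally unital; I would pick a family $\{I_j\}$ of ideals witnessing local unitality, observe that simplicity forces each $I_j$ to be $\{0\}$ or $A$, and conclude from $(\sum I_j)^-=A\ne\{0\}$ that $I_j=A$ for some $j$. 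The corresponding $u_j\in A$ then satisfies $(\b1-u_j)A=A(\b1-u_j)=\{0\}$, whence $u_j=\b1\in A$ and $A$ is unital.

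For the ``if'' direction: when $A$ is elementary, $QM(A)=M(A)$ is already recorded in the introduction; when $A$ is unital, every $T\in QM(A)$ satisfies $T=\b1\cdot T\cdot\b1\in A$, so $QM(A)=A=M(A)$. I do not anticipate any real obstacle in this corollary --- it is short bookkeeping on top of Theorem 2.4 --- and the only points deserving a word of justification are that a simple dual $\calg$ is elementary and that, for a simple algebra, the defining family in the local unitality condition collapses to $\{A\}$, turning the local unit into an honest identity.
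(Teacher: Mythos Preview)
Your proposal is correct and is exactly the intended argument: the paper offers no separate proof of this corollary, treating it as an immediate consequence of Theorem~2.4, and your bookkeeping (simplicity collapses the direct sum, a simple dual algebra is elementary, and a simple locally unital algebra has a global unit) is precisely what is needed to make that explicit.
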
   %2.5

Combining the theorem with \cite[Theorem 3.27]{B2}, we obtain:

\begin{cor}   %2.6
If $A$ is a $\sigma$-unital $\calg$, then the middle and weak forms of semicontinuity coincide in $A^{**}$ if and only if $A$ is the direct sum of a dual $\calg$ and a locally unital $\calg$.
\end{cor}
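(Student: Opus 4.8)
The plan is to obtain the corollary by concatenating two equivalences, so that essentially no new argument is needed beyond checking that the hypotheses line up.

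First I would invoke \cite[Theorem 3.27]{B2}. In the theory of the three forms of semicontinuity developed there --- strong, middle, and weak --- the self-adjoint multipliers are exactly the $h\in A^{**}$ for which both $h$ and $-h$ satisfy the strong form, and the self-adjoint quasi-multipliers exactly those for which both $h$ and $-h$ satisfy the weak form. \cite[Theorem 3.27]{B2} gives the algebraic reformulation of the coincidence of the middle and weak classes: for a $\sigma$-unital $\calg$ $A$, the middle and weak forms of semicontinuity coincide in $A^{**}$ if and only if $QM(A)=M(A)$. I would also note that $QM(A)$ and $M(A)$ are $*$-invariant linear subspaces of $A^{**}$, so that $QM(A)_{\text{sa}}=M(A)_{\text{sa}}$ is equivalent to $QM(A)=M(A)$; this is what lets one pass between the semicontinuity statement (which is about self-adjoint elements) and the bare identity $QM(A)=M(A)$.

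Then I would apply Theorem 2.4: since $A$ is $\sigma$-unital, $QM(A)=M(A)$ holds precisely when $A$ is the direct sum of a dual $\calg$ and a locally unital $\calg$. Composing this equivalence with the previous one yields both implications of the corollary at once.

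The main (indeed only) obstacle is bibliographic rather than mathematical: one must locate in \cite{B2} the exact statement that couples ``middle $=$ weak semicontinuity'' with ``$QM(A)=M(A)$'', reconcile it with that formulation if it is phrased in terms of a different but equivalent algebraic condition, and check that the $\sigma$-unitality hypothesis assumed here is precisely the one used there (and that ``middle'' and ``weak'' are read as the classes defined in \cite{B2}). Once that is settled the corollary is immediate. All of the $C^*$-algebraic content of the corollary resides in Theorem 2.4, and all of its semicontinuity-theoretic content in \cite{B2}.
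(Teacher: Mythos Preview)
Your proposal is correct and matches the paper's approach exactly: the paper derives the corollary in a single sentence by combining Theorem 2.4 with \cite[Theorem 3.27]{B2}, which is precisely the concatenation of equivalences you describe. Your additional remarks about self-adjointness and the bibliographic caveat are reasonable elaborations but not needed beyond what the paper itself says.
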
   %2.6

It occurred belatedly to us that since many of the applications of quasi-multipliers concern quasi-multipliers of imprimitivity bimodules or Hilbert $C^*$--bimodules, it might make sense to consider the title question in a broader context. In proposition 2.7 below, whose proof is purely formal, the imprimitivity bimodule case is reduced to what has already been done. Example 2.8 below deals with what are probably the the simplest examples of Hilbert $C^*$--bimodules that are not impimitivity bimodules.  Although we have a solution for these examples, it has not immediately inspired a conjecture for the general case.  But Example 2.8 does suggest that it may be easier to find when $QM(X)=M(X)$ than to answer the constituent questions whether $LM(X)\subset RM(X)$ or $RM(X)\subset LM(X)$.

Hilbert $C^*$--bimodules were introduced in \cite{S} as a generalization of imprimitivity bimodules. If $X$ is an $A-B$ Hilbert $C^*$--bimodule, then $X$ has a linking algebra $L$. Then $L$ is a $\calg$ endowed with two multiplier projections $p$ and $q$, such that $pLp$ is identified with $A$, $qLq$ is identified with $B$, and $pLq$ is identified with $X$. The existence of $L$ may be taken as a working definition of Hilbert $C^*$--bimodule. Then $X$ is an $A-B$ imprimitivity bimodule if and only if each of $A$ and $B$ generates $L$ as an ideal. Linking algebras of imprimitivity bimodules were introduced in \cite{BGR}.

If $X$ and $L$ are as above, then we define $M(X)=pM(L)q$, $LM(X)=pLM(L)q$, $RM(X)=pRM(L)q$, and $QM(X)=pQM(L)q$.
 Note that $X^{**}$ can be identified with $pL^{**}q$. It is not hard to see that for $T\in X^{**}$, $T\in M(X)$ if and only if $aT\in X$ and $Tb\in X$, $\forall a\in A, b\in B, T\in LM(X)$ if and only if $Tb\in X$, $\forall b\in B$, $T\in RM(X)$ if and only if $aT\in X$, $\forall a\in A$, and $T\in QM(X)$ if and only if $aTb\in X$, $\forall a\in A$, $b\in B$.
Because it is no longer true that $RM(X)=LM(X)^*$, there are more than one question to consider. Since $M(X)=LM(X)\cap RM(X)$, there are actually only two questions. Namely, we ask where there $QM(X)=LM(X)$, which turns out to be equivalent to $RM(X)\subset LM(X)$, and whether $QM(X)=RM(X)$, which is equivalent to $LM(X)\subset RM(X)$. 

\begin{prop}  % 2.7
Let $A$ be a $\calg$.
\begin{itemize}
\item[(i)]\ Then $LM(A)=M(A)$ if and only if $QM(A)=M(A)$. 

Let $X$ be an $A-B$ Hilbert $C^*$--bimodule.

\item[(ii)]\ Then $QM(X)=LM(X)$ if and only if $RM(X)\subset LM(X)$.

\item[(iii)]\ Then $QM(X)=RM(X)$ if and only if $LM(X)\subset RM(X)$

Let $X$ be an $A-B$ imprimitivity bimodule and $L$ its linking algebra. 

\item[(iv)]\ Then $QM(X)=LM(X)$ if and only if $QM(A)=M(A)$. 

\item[(v)]\ Then $QM(X)=RM(X)$ if and only if $QM(B)=M(B)$.
  
\item[(vi)]\ Then $QM(X)=M(X)$ if and only if $QM(L)=M(L)$. 
\end{itemize}
\end{prop}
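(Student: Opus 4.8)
The plan is to prove the six parts roughly in order, using (i)--(iii) to settle (iv)--(vi) by passing to the linking algebra; Cohen factorization is the recurring tool that makes the argument ``purely formal''. For (i), the ``if'' direction is immediate from $M(A)\subseteq LM(A)\subseteq QM(A)$. For ``only if'' I would start from $LM(A)=M(A)$, which by taking adjoints also gives $RM(A)=M(A)$. The key point is that for $T\in QM(A)$ and $a\in A$ the element $Ta$ lies in $RM(A)$, since $b(Ta)=bTa\in A$ for every $b\in A$; hence $Ta\in M(A)$. Writing an arbitrary $c\in A$ as $c=ab$ with $a,b\in A$ (Cohen factorization) then gives $Tc=(Ta)b\in M(A)A\subseteq A$, so $T\in LM(A)=M(A)$.

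Parts (ii) and (iii) are the bimodule versions of this, using the descriptions of $LM(X),RM(X),QM(X)$ recorded above. For (ii): if $QM(X)=LM(X)$ then $RM(X)\subseteq LM(X)$, because $aTb=(aT)b\in X\cdot B\subseteq X$ whenever $aT\in X$; conversely, if $RM(X)\subseteq LM(X)$ and $T\in QM(X)$, then for $b\in B$ and all $a\in A$ we have $a(Tb)=aTb\in X$, so $Tb\in RM(X)\subseteq LM(X)$, and Cohen factorization in $B$ (writing $c=bb'$) yields $Tc=(Tb)b'\in X$, so $T\in LM(X)$. Part (iii) follows by interchanging $A$ with $B$ and left with right, or equivalently by applying (ii) to the conjugate bimodule.

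For (iv)--(vi) I would work inside the linking algebra $L$, with $A=pLp$, $B=qLq$, $X=pLq$ and conjugate module $qLp$; imprimitivity means exactly that $p$ and $q$ are full, i.e. the closed linear spans of $X\cdot(qLp)$ and of $(qLp)\cdot X$ are $A$ and $B$. The technical core is a corner analysis: using Cohen factorization one checks $pM(L)p=M(A)$, $pQM(L)p=QM(A)$, $pLM(L)p=LM(A)$, $pRM(L)p=RM(A)$ (and the analogues with $q$ and $B$), and that each of $M(L),LM(L),RM(L),QM(L)$ is the ``$2\times2$ matrix'' assembled from the corresponding objects on $A$, on $X$ and $qLp$, and on $B$. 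Granting this, for (iv) I would use (ii) together with $QM(X)=LM(X)\iff RM(X)\subseteq LM(X)\iff RM(X)=M(X)$ to reduce to proving $RM(X)=M(X)\iff RM(A)=M(A)$ (the right side being equivalent to $QM(A)=M(A)$ by (i)); the bridge is that $Rz\in RM(A)$ whenever $R\in RM(X)$ and $z\in qLp$ (because $a(Rz)=(aR)z\in X\cdot(qLp)\subseteq A$ for all $a$), and symmetrically $Rx\in RM(X)$ whenever $R\in RM(A)$ and $x\in X$, after which one transfers the equality across $X\cdot(qLp)$ or $(qLp)\cdot X$ using boundedness of $R$, fullness, and the inclusions $M(A)\cdot X\subseteq X$, $M(X)\cdot(qLp)\subseteq A$. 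Part (v) is the mirror image. For (vi), ``if'' is immediate since $QM(X)=pQM(L)q$ and $M(X)=pM(L)q$; for ``only if'', $QM(X)=M(X)$ forces $QM(X)=LM(X)$ and $QM(X)=RM(X)$, so (iv) and (v) give $QM(A)=M(A)$ and $QM(B)=M(B)$, whence by the corner analysis $QM(L)$ and $M(L)$ have the same matrix and $QM(L)=M(L)$.

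The step I expect to be the main obstacle is the corner analysis of $L$: one must verify that the one-sided conditions defining $LM$, $RM$ and $QM$ pass correctly between the $X$-corner and the $A$- and $B$-corners. This is exactly where fullness of $p$ and $q$ and repeated Cohen factorization do the work; the rest is formal manipulation of the definitions.
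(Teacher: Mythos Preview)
Your argument is correct. Parts (i)--(iii) match the paper's proof essentially line for line (the paper writes ``$A^2=A$'' where you invoke Cohen factorization), and (vi) is also handled the same way; the paper just makes explicit the observation $QM(X^*)=QM(X)^*$, $M(X^*)=M(X)^*$ that you need for the $qLp$-corner.

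For (iv)--(v) you take a somewhat more elaborate route than the paper. You set up a full ``corner analysis'' of $L$ (the identifications $pQM(L)p=QM(A)$, the $2\times 2$ matrix decomposition of $QM(L)$ and $M(L)$, etc.) and then reduce (iv) to the intermediate equivalence $RM(X)=M(X)\Leftrightarrow RM(A)=M(A)$, bridged by the maps $R\mapsto Rz$ and $R\mapsto Rx$. The paper bypasses all of this machinery: for the forward direction of (iv) it simply takes $T\in QM(A)$, notes that $Tx\in RM(X)\subset QM(X)=LM(X)$ for each $x\in X$, and then computes directly $T\langle x,y\rangle_A=Txy^*\in A$; density of the span of $\langle X,X\rangle_A$ in $A$ finishes. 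The converse runs symmetrically via $Tx^*\in RM(A)\subset QM(A)=M(A)$ and $T\langle x,y\rangle_B=Tx^*y\in X$. Your corner analysis is valid and is, in effect, what underlies these one-line manipulations, but the paper never states it as a separate lemma and the resulting proof of (iv)--(v) is correspondingly shorter.
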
   % 2.7

\begin{proof}
\begin{itemize}
\item[(i)]\ Assume $LM(A)=M(A)$ and $T\in QM(A)$. If $a\in A$, then $aT\in LM(A)$. Hence $aT\in M(A)$ and $AaT\subset A$. Since $A^2=A$, this implies $QM(A)\subset RM(A)$. Since $QM(A)=QM(A)^*$ and $LM(A)=RM(A)^*$, we also have $QM(A)\subset LM(A)$, whence $QM(A)=M(A)$.

\item[(ii)]\ Since $RM(X)\subset QM(X)$, one direction is obvious. So assume $RM(X)\subset LM(X)$ and let $T\in QM(X)$. For $b\in B$, $Tb\in RM(X)$. Therefore $TbB\subset X$. Since $B^2=B$, it follows that $T\in LM(X)$.

\item[(iii)]\ is similar to (ii).

\item[(iv)]\ First assume $QM(X)=LM(X)$ and let $T\in QM(A)$. If $x\in X$, then $Tx\in RM(X)\subset QM(X)$. Therefore $Tx\in LM(X)\subset LM(L)$. So if $y\in X$, then $Txy^*=T\langle x,y\rangle_A\in A$. Since $X$ is an imprimitivity bimodule, $\langle X,X\rangle_A$ spans a dense subset of $A$. Thus we have shown  $QM(A)=M(A)$.

Now assume $QM(A)=M(A)$ and $T\in QM(X)$. If $x\in X$, then $Tx^*\in RM(A)\subset QM(A)$. Therefore $Tx^*\in M(A)\subset M(L)$. So for $y\in X$, $Tx^*y=T\langle x,y\rangle_B\in X$. Since the span of $\langle X,X\rangle_B$ is dense in $B$, this shows that $T\in LM(X)$. 

\item[(v)]\ is similar to (iv)

\item[(vi)]\ Since one direction is obvious, assume $QM(X)=M(X)$, by (iv) and (v), then $QM(A)=M(A)$ and $QM(B)=M(B)$. Note that $X^*$ is a $B-A$ Hilbert $C^*$--bimodule, $QM(X^*)=QM(X)^*$, and $M(X^*)=M(X)^*$. So for each of the four components of $L$, we have that every quasi-multiplier is a multiplier. Therefore $QM(L)=M(L)$.
\end{itemize}
\end{proof}

In connection with (iv) and (v) note that $A$ is strongly morita equivalent to $B$ if and only if an $A-B$ imprimitivity bimolule exists. Several important properties of $\calgs$ are preserved by strong morita equivalence. The property of being a dual $\calg$ is so preserved but local unitality is not preserved. It is easy to construct examples of strongly morita equivalent separable $\calgs$ $A$ and $B$ such that $QM(A)=M(A)$ but $QM(B)\ne M(B)$

\begin{example}

Let $H_1$ and $H_2$ be Hilbert spaces and let $X=\bK(H_2,H_1)$, the space of compact operators from $H_2$ to $H_1$. Then $X$ is a $\bK(H_1)-\bK(H_2)$ imprimitivity bimodule, but $X$ can also be made into an $A-B$ Hilbert $C^*$--bimodule in many ways. We just take $A$  and $B$ to be $C^*$--subalgebras of $B(H_1)$ and $B(H_2)$ such that $A\supset\bK(H_1)$ and $B\supset \bK(H_2)$. Now $X^{**}$ can be identified with $B(H_2, H_1)$, the space of all bounded linear operators from $H_2$ to $H_1$. (Note that $L\subset B(H_1\oplus H_2)$.) For $T\in X^{**}$, $T\in LM(X)$ if and only if $Tb$ is compact, $\forall b\in B$. This is equivalent to $T^*TB\subset \bK(H_2)$. In other words, $T\in LM(X)$ if and only if the image of $T^*T$ in the Calkin algebra of $H_2$ is orthogonal to the image of $B$ in this Calkin algebra. Similarly $T\in RM(X)$ if and only if the image of $TT^*$ in the Calkin algebra of $H_1$ is orthogonal to the image of $A$ in this Calkin algebra.   
To analyze this, we add some reasonable hypotheses.  Assume that each of $H_1$ and $H_2$ is separable and infinite dimensional and that each of $A$ and $B$ is $\sigma-$unital.  Note that any non-zero projection in the Calkin algebra can be lifted to a projection which is necessarily of infinite rank and that if $P$ and $Q$ are infinite rank projections in $B(H_1)$ and $B(H_2)$ there is a partial isomety $U$ such that $UU^*=P$ and $U^*U=Q$.  Looking at $H_1$ for example, we see that there is a Calkin projection, namely $\b1$, which fails to annihilate the Calkin image of $A$ if and only if $A\ne \bK(H_1)$.  Also it was essentially shown in \cite{BDF} that there is a non-zero Calkin projection which does annihilate the Calkin image of $A$ if and only if $\b1\notin A$.  Then we see that $LM(X)\subset RM(X)$ if and only if either $\b1\in B$ (which causes $LM(X)$ to be small) or $A=\bK(H_1)$ (which causes $RM(X)$ to be big), and $RM(X)\subset LM(X)$ if and only if either $\b1\in A$ or $B=\bK(H_2)$.  Also $QM(X)=M(X)$ if and only if either both $A$ and $B$ contain $\b1$ or both $A=\bK(H_1)$ and $B=\bK(H_2)$.  Of course the last case is the case when $X$ is an imprimitivity bimodule.
\end{example}

Although Example 2.8 may deal with the simplest examples, it may actually be exceptional.  In support of this, we point out that in \cite{B3} the case where $A$ has an infinite dimensional elementary direct summand was the ``bad'' case. 

As an after--afterthought, there is another way to generalize the problem of this paper. Namely, consider questions like the title question in the context of centralizers of Pedersen's ideal.  We will discuss this informally.  The interested reader can fill in the details with the help of the discussion of centralizers in \cite[\S 3.12]{P} and the discussion of Pedersen's ideal in \cite[\S 5.6]{P}.  We think the appropriate question is whether every locally bounded quasi-centralizer of Pedersen's ideal comes from a double centralizer, and the answer, if $A$ is $\sigma-$unital, is the same as before.  Namely, $A$ must be the direct sum of a dual $\calg$ and a locally unital $\calg$.  If we drop the local boundedness hypothesis, then $A$ must be locally unital.  The reason for this last assertion is that it was shown in \cite{LT} that every double centralizer of Pedersen's ideal is locally bounded, and it was pointed out in\cite{B3} that if $A$ has an infinite dimensional elementary direct summand, then there definitely are non-locally bounded quasi-centralizers of Pedersen's ideal.  Our reason for preferring the first version of the question is that we consider non-locally bounded centralizers to be pathological.

To prove the forward direction of the assertion above, note that the hypothesis implies that every $T$ in $QM(A)$ comes from a double centralizer $C$.  Since $C$ is locally bounded and agrees with the bounded $T$ on Pedersen's ideal, then $C$ is bounded.  Thus $C$ comes from a multiplier and $QM(A)=M(A)$.  For the converse, the case of dual $\calgs$ is trivial.  If $A$ is locally unital, let $I_j$ and $u_j$ be as in the definition of local unitality.  It is not hard to see that each $I_j$ is contained in Pedersen's ideal and that $u_j$ may be taken in Pedersen's ideal.  (Actually Pedersen's ideal is $\sum I_j$ in this case.)  Then the argument for the converse direction of Theorem 2.4 applies.

\end{document}